\theoremstyle{plain}
\newtheorem{thm}{Theorem}[section]
\theoremstyle{definition}
\begin{document}

\title{The Homogeneous Property of the Hilbert Cube}

\author{Denise M. Halverson and David G. Wright}

\begin{abstract}
We demonstrate the homogeneity of the Hilbert Cube.  In particular, we construct explicit self-homeomorphisms of the Hilbert cube so that given any two points, a homeomorphism moving one to the other may be realized.
\end{abstract}

\subjclass[2010]{Primary 57N20; Secondary 54B10}

\maketitle

\section{Introduction}

It is well-known that the Hilbert cube is homogeneous, but proofs such as those in Chapman's CBMS Lecture Notes \cite{Chapman} show the existence of a homeomorphism rather than an explicit homeomorphism.  The purpose of this paper is to demonstrate how to construct explicit self-homeomorphisms of the Hilbert cube so that given any two points, a homeomorphism moving one to the other may be realized.  This is a remarkable fact because in a finite dimensional manifold with boundary, interior points are topologically distinct from boundary points.  Hence, the $n$-dimensional cube $$C^n = \overset{n}{\underset{i=1}{\Pi}} I_i,$$ where $I_i = [-1,1]$, fails to be homogeneous because there is no homeomorphism of $C^n$ onto itself mapping a boundary point to an interior point and vice versa.  Thus, the fact that the Hilbert cube is homogeneous implies that there is no topological distinction between what we would naturally define to be the boundary points and the interior points, based on the product structure.

Formally, the \emph{Hilbert cube} is defined as $$Q = \overset{\infty}{\underset{i=1}{\Pi}} I_i$$ where each $I_i$ is the closed interval $[-1,1]$ and $Q$ has the product topology.  A point $p \in Q$ will be represented as $p=(p_i)$ where $p_i \in I_i$.  A metric on $Q$ is given by $$d(p,q) = \underset{i=0}{\overset{\infty}{\Sigma}} \dfrac{|p_i - q_i|}{2^i}$$ where $p,q \in Q$.  The \emph{pseudo-interior} of $Q$ is given by $$Q^o = \overset{\infty}{\underset{i=1}{\Pi}} I_i^o$$ where each $I_i^o$ is the open interval $(-1,1)$.  The \emph{pseudo-boundary} of $Q$ is denoted $B(Q)$ and is defined as $B(Q)=Q-Q^o$.  Since the arbitrary product of compact spaces with the product topology is compact, we note that $Q$ must be compact.

It is an easy matter to show that given any two points $p, q \in Q^o$, there is a self-homeomorphism of $Q$ taking $p$ to $q$.  Such a homeomorphism may be constructed by defining on each coordinate $I_i$ of $Q$ a piecewise linear map of $I_i$ onto itself fixing the boundary points and taking $p_i$ onto $q_i$.  In particular we define $f_i: I_i \to I_i$;
$$f_i(t) = \left\{
                   \begin{array}{ll} \medskip
                     (t+1)\left( \dfrac{q_i+1}{p_i+1} \right)-1, & \hbox{ if } -1 \leq t \leq p_i;\\
                     (t-p_i)\left( \dfrac{1-q_i}{1-p_i} \right)+q_i, & \hbox{ if } p_i < t \leq 1.
                   \end{array}
                 \right.$$
where $p = (p_i)$ and $q = (q_i)$.  Then let $f: Q \to Q$;
\begin{equation} \label{E1}
f(x) = (f_i(x_i))
\end{equation}
for any $x \in Q$.  Then $f(p) = q$.  Since $f$ is the product of homeomorphisms, clearly, any basic open set in the product topology is mapped to a basic open set by both $f$ and $f^{-1}$.  Thus both $f$ and $f^{-1}$ are continuous bijective mappings of $Q$ onto itself so $f$ is a homeomorphism.  Therefore, it only remains to be shown that given any point in the pseudo-boundary, there is a homeomorphism taking it to a point in the pseudo-interior.

\section{General Definitions}

An arbirary product space $Y^J$ is defined as $$Y^J = \underset{\alpha \in J}{\Pi} Y_{\alpha}$$ where each $Y_{\alpha}$ is a copy of $Y$.  Also, $(X,d)$ denotes a metric space with metric $d$.  Note that if $X$ is a compact space then $(X,d)$ must be a bounded and complete metric space.  The set of all continuous functions of $X$ into $Y$ is denoted $C(X,Y)$, or as $C(X)$ if $X$ and $Y$ are the same space.  Then $H(X)$ represents the set of all homeomorphisms in $C(X)$.  Given a metric space $(Y,d)$ and product space $Y^J$, the \emph{sup-norm metric} is defined as $\rho: Y^J \times Y^J \to \mathbb{R}$; $$\rho(f,g) = \sup\{d(f(x),g(x)) \ | \ x \in J \}.$$

\section{A First Attempt}

Consider the pseudo-boundary point $p=(1,1,1,\ldots)$.  A first attempt at the construction of a homeomorphism that maps $p$ into the pseudo-interior may be to consider the homeomorphism $f_n^m$ wich isometrically twists the square boundary of the $2$-cell $I_n \times I_m$ of $Q$ a distance of one unit in the counter-clockwise direction and then maps the interior points by radial extension (Figure \ref{F1}).

\begin{figure}
  \centering
  \includegraphics[scale=0.7]{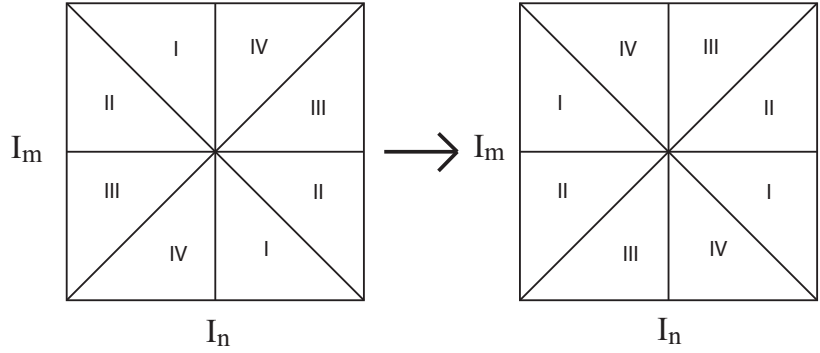}
  \caption{A guide to the mapping $f_n^m$.}
  \label{F1}
\end{figure}

Formally, we define $\overline{f_n^m}: I_n \times I_m \to I_n \times I_m$;
$$\overline{f_n^m}(x,y)=\left\{
                     \begin{array}{ll}
                       (-y,x+y) \text{ if } |x| \leq |y| \text{ and } xy<0; \\
                       (x,x+y) \text{ if } |x| \geq |y| \text{ and } xy<0;  \\
                       (x-y,x) \text{ if } |x| \geq |y| \text{ and } xy \geq 0;  \\
                       (x-y,y) \text{ if } |x| \leq |y| \text{ and } xy \geq 0.
                     \end{array}
                   \right. $$ where $x \in I_n$ and $y \in I_m$.  Let $f_n^m$ be defined by crossing $\overline{f_n^m}$ with the identity mapping on all other factors.  Then $$f_1^2(p) = (0,1,1,\ldots).$$ In general, $$f_n^{n+1} f_{n-1}^n \ldots f_1^2(p) = (0,0, \ldots 0,1,1,\ldots)$$ where the first $n$ coordinates are $0$.  Let $$f = \underset{n \to \infty}{\lim} f_n^{n+1} f_{n-1}^n \ldots f_1^2.$$   It may be observed that $\left\{ f_n^{n+1} f_{n-1}^n \ldots f_1^2 \right\}$ is a Cauchy sequence in $\rho$ since $$\rho\left( f_{n+1}^{n+2}\left(f_n^{n+1} f_{n-1}^n \ldots f_1^2\right), f_n^{n+1} f_{n-1}^n \ldots f_1^2 \right) \leq \dfrac{2}{2^{n+1}} + \dfrac{2}{2^{n+2}} < \dfrac{1}{2^{n-1}}.$$  It will be shown shortly that the set of all continuous functions on a complact metric space is complete with respect to $\rho$ so we will be able to conclude that $f$ exists and is continuous.  Thus $f$ is a continuous function which maps $p$ to the origin of the Hilbert cube.

However, although $f$ is continous, $f$ is not a homeomorphism.  This is demonstrated by considering any point $q = (t.t.t.\ldots)$ where $-1 \leq t < 1$.  Observe that $$f_1^2(q) = (0,t,t,\ldots)$$ and in general, $$f_n^{n+1} f_{n-1}^n \ldots f_1^2(q) = (0,0, \ldots 0,t,t,\ldots)$$ where the first $n$ coordinates are $0$.  Thus $f(q) = 0$.  Therefore $f$ is not one-to-one, so $f$ is not a homeomorphism.

\section{Convergence Criteria}

We would like to establish a criteria which will guarantee that a sequence of compositions of homeomorphisms converges to a homeomorphism.  First it will be shown that the metric space $(\mathcal{C}(X), \rho)$ is complete and then we will extend to a more useful topologically equivalent metric which is complete in $H(X)$.

\begin{thm} \label{T1}
Let $(Y,d)$ be a compact metric space.  Then $(Y^J,\rho)$ is a complete metric space for any set $J$ where $\rho$ is the sup-norm metric.
\end{thm}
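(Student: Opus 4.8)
The plan is to follow the standard template for completeness of a space of functions under a sup metric. Before the Cauchy-sequence argument I would dispose of the preliminary point that $\rho$ genuinely defines a metric on $Y^J$. Here an element of $Y^J$ is a function $f \colon J \to Y$, and since $Y$ is compact it is bounded, say $\mathrm{diam}(Y) = D < \infty$; hence $\rho(f,g) \le D$ for every pair $f,g \in Y^J$ and the supremum always names a well-defined real number. Non-negativity and symmetry are inherited from $d$, and $\rho(f,g)=0$ forces $d(f(\alpha),g(\alpha))=0$ for every $\alpha \in J$, i.e.\ $f=g$; the triangle inequality comes from taking the supremum over $\alpha$ in the pointwise inequality $d(f(\alpha),h(\alpha)) \le d(f(\alpha),g(\alpha)) + d(g(\alpha),h(\alpha))$.

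For completeness, let $(f_n)$ be a Cauchy sequence in $(Y^J,\rho)$. The first step is to manufacture a candidate limit coordinatewise: for each fixed $\alpha \in J$ the inequality $d(f_n(\alpha),f_m(\alpha)) \le \rho(f_n,f_m)$ shows that $(f_n(\alpha))_n$ is Cauchy in $(Y,d)$. Since a compact metric space is complete, this sequence converges in $Y$; let $f(\alpha)$ denote its limit. This defines a function $f \colon J \to Y$, so $f \in Y^J$.

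The second step is to promote coordinatewise convergence to convergence in $\rho$, which is the one place a little care is needed — the familiar interchange-of-limits estimate. Given $\varepsilon > 0$, choose $N$ so that $\rho(f_n,f_m) < \varepsilon$ whenever $n,m \ge N$; then $d(f_n(\alpha),f_m(\alpha)) < \varepsilon$ for every $\alpha \in J$ and all $n,m \ge N$. Fixing $\alpha$ and $n \ge N$ and letting $m \to \infty$, continuity of $d$ in its second variable gives $d(f_n(\alpha),f(\alpha)) \le \varepsilon$. Crucially this bound does not depend on $\alpha$, so taking the supremum over $\alpha \in J$ yields $\rho(f_n,f) \le \varepsilon$ for all $n \ge N$. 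Hence $f_n \to f$ in $(Y^J,\rho)$ and the space is complete.

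I expect the only genuinely delicate point to be ensuring that the estimate in the last step is uniform in $\alpha$, so that passing to the supremum is legitimate; the coordinatewise construction of $f$ and the verification that $f \in Y^J$ are otherwise routine. Note that, in contrast to the classical theorem that a uniform limit of continuous functions is continuous, no continuity of $f$ has to be checked here, since $Y^J$ imposes no such condition — that refinement is presumably what the ``more useful topologically equivalent metric'' promised in the next section is designed to handle.
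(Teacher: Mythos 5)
Your argument is correct and follows essentially the same route as the paper: coordinatewise limits exist because the compact space $Y$ is complete, and the standard fix-$n$, let-$m\to\infty$ estimate upgrades pointwise to uniform ($\rho$) convergence. The extra preliminary check that $\rho$ is a genuine (finite-valued) metric is a harmless addition not spelled out in the paper.
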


\begin{proof}
First, note that $(Y,d)$ is a complete metric space since $Y$ is compact.  Let $\{f_i\}$ be a Cauchy sequence in $Y^J$.  Let $\alpha \in J$.  Then $$d\left(f_n(\alpha), f_m(\alpha)\right) \leq \rho(f_n, f_m).$$  Thus $\{f_i\}$ is a Cauchy sequence in $(Y,d)$ and therefore must converge to some point $y_{\alpha} \in Y$.  Define $f: J \to Y; f(\alpha) = y_{\alpha}$.  We want to show that $\{f_i\} \to f$.

Since $\{f_i\}$ is a Cauchy sequence with respect to $\rho$, then for any $\epsilon > 0$, there exists a number $N$ such that whenever $n,m > N$, then $$\rho\left(f_n,f_m \right) < \frac{\epsilon}{2}.$$ Thus, $$d\left(f_n(\alpha)), f_m(\alpha)\right) < \frac{\epsilon}{2}$$ for all $\alpha$.  By holding $\alpha$ and $n$ fixed and letting $m$ go to infinity, we obtain $$d\left(f_n(\alpha), f(\alpha)\right) \leq
\frac{\epsilon}{2}.$$  This must be true for all $\alpha$ so $$\rho(f_n, f) = \sup \{ d(f_n(\alpha), f(\alpha)) \ | \ \alpha \in Y \} \leq \frac{\epsilon}{2} < \epsilon.$$  Then for $n > N$, $\rho \left(f_n, f\right) < \epsilon$.  Therefore $\{f_i\}\to f$ so $\left(Y^J, \rho\right)$ is complete.
\end{proof}

\begin{thm} \label{T2}
Let $X$ be a topological space and $(Y,d)$ a compact metric space.  Then $(\mathcal{C}(X,Y), \rho)$ is complete where $\mathcal{C}(X,Y)$ is the set of all continuous functions from $X$ into $Y$ and $\rho$ is the sup-norm metric.
\end{thm}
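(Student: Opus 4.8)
The plan is to realize $\mathcal{C}(X,Y)$ as a closed subspace of a complete metric space already handed to us by Theorem \ref{T1}, and then invoke the standard fact that a closed subset of a complete metric space is itself complete. Concretely, take $J$ to be the underlying point set of $X$. Every function from $X$ to $Y$, continuous or not, is an element of $Y^J$, so $\mathcal{C}(X,Y) \subseteq Y^J$, and the sup-norm metric on $\mathcal{C}(X,Y)$ is precisely the restriction of the sup-norm metric on $Y^J$. By Theorem \ref{T1}, $(Y^J,\rho)$ is complete. It therefore suffices to prove that $\mathcal{C}(X,Y)$ is $\rho$-closed in $Y^J$.

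Closedness amounts to the assertion that whenever a sequence $\{f_i\}$ of continuous functions converges in $\rho$ to some $f \in Y^J$, the limit $f$ is again continuous; this is the classical ``uniform limit of continuous functions is continuous'' statement. I would prove it by the usual $\epsilon/3$ argument: fix $x_0 \in X$ and $\epsilon > 0$; choose $n$ with $\rho(f_n,f) < \epsilon/3$; use continuity of $f_n$ at $x_0$ to find an open neighborhood $U$ of $x_0$ with $d(f_n(x),f_n(x_0)) < \epsilon/3$ for all $x \in U$; and then for all $x \in U$ combine these by the triangle inequality,
\[
d(f(x),f(x_0)) \leq d(f(x),f_n(x)) + d(f_n(x),f_n(x_0)) + d(f_n(x_0),f(x_0)) < \epsilon .
\]
Hence $f$ is continuous at $x_0$, and since $x_0$ was arbitrary, $f \in \mathcal{C}(X,Y)$.

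To finish, let $\{f_i\}$ be a Cauchy sequence in $(\mathcal{C}(X,Y),\rho)$. It is in particular Cauchy in $(Y^J,\rho)$, hence converges there to some $f \in Y^J$; by the previous paragraph $f \in \mathcal{C}(X,Y)$, and the convergence takes place within $\mathcal{C}(X,Y)$. Thus every Cauchy sequence in $\mathcal{C}(X,Y)$ converges, so $(\mathcal{C}(X,Y),\rho)$ is complete. The only step with real content is the $\epsilon/3$ argument showing $\mathcal{C}(X,Y)$ is closed; the remainder is a direct appeal to Theorem \ref{T1} together with the standard fact that closed subsets of complete metric spaces are complete, so I anticipate no serious obstacle.
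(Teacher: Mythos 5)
Your proposal is correct and follows essentially the same route as the paper: identify $\mathcal{C}(X,Y)$ inside $Y^X$, use Theorem \ref{T1} to obtain a $\rho$-limit of the Cauchy sequence, and conclude continuity of the limit from the uniform-limit theorem. The only difference is cosmetic --- you phrase it as closedness of $\mathcal{C}(X,Y)$ in $(Y^X,\rho)$ and write out the $\epsilon/3$ argument, whereas the paper cites the uniform-limit fact as well known.
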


\begin{proof}
Let $\{f_i\}$ be a Cauchy sequence in $(\mathcal{C}(X,Y), \rho)$. Then by Theorem \ref{T1} we know that $\{f_i\} \to f$ where $f \in Y^X$.  Then for any $\epsilon > 0$, there exists an $N$ such that whenever $n > N$ then $\rho(f_n,f) < \epsilon$.  Thus for all $x \in X$, $$d(f_n(x), f(x)) \leq \rho(f_n,f) < \epsilon.$$ Then $\{f_i\} \to f$ uniformly.  It is a well known fact that the limit function of a uniformly convergent sequence of continuous functions converges to a continuous function, so $f \in \mathcal{C}(X,Y)$.
\end{proof}

We now define a metric, $\zeta$, which will be more useful in establishing a convergence criteria for $H(X)$.  Let $\zeta: H(X) \times H(X) \to \mathbb{R}$; $$\zeta(f,g)= \rho(f,g) + \rho(f^{-1}, g^{-1})$$ where $f,g \in H(X)$.  It is easily checked that $\zeta$ satisfies all the properties of a metric.  The next theorem shows that $\zeta$ is an equivalent metric to $\rho$ which is complete in $H(X)$.

\begin{thm} \label{T3}
The metric $\zeta$ is equivalent to $\rho$ in $H(X)$ where $(X,\rho)$ is a compact metric space.  Furthermore, $(H(X), \zeta)$ is a complete metric space.
\end{thm}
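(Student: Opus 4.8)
The plan is to treat the two assertions separately, and in both to use compactness of $X$ to turn uniform statements about the sequence $\{f_n\}$ into statements about limit points of auxiliary sequences in $X$.

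For the equivalence of $\zeta$ and $\rho$ on $H(X)$: since $\zeta(f,g) = \rho(f,g) + \rho(f^{-1},g^{-1}) \ge \rho(f,g)$, every $\zeta$-ball is contained in the concentric $\rho$-ball, so the identity map $(H(X),\zeta)\to(H(X),\rho)$ is (Lipschitz) continuous. For the reverse direction, since metric topologies are determined by their convergent sequences, it suffices to show that $\rho(f_n,f)\to 0$ with $f_n,f\in H(X)$ forces $\rho(f_n^{-1},f^{-1})\to 0$. I would argue by contradiction: if this fails, then after passing to a subsequence there is an $\epsilon>0$ and points $x_n\in X$ with $d(f_n^{-1}(x_n),f^{-1}(x_n))\ge\epsilon$. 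Using compactness of $X$, pass to a further subsequence so that $x_n\to x$ and $y_n:=f_n^{-1}(x_n)\to z$; continuity of $f^{-1}$ gives $f^{-1}(x_n)\to f^{-1}(x)$, so $d(z,f^{-1}(x))\ge\epsilon$, hence $z\ne f^{-1}(x)$. On the other hand $f_n(y_n)=x_n$ and $d(f_n(y_n),f(z))\le\rho(f_n,f)+d(f(y_n),f(z))\to 0$ by uniform convergence and continuity of $f$, so $x_n=f_n(y_n)\to f(z)$, whence $f(z)=x$, i.e.\ $z=f^{-1}(x)$ — a contradiction.

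For completeness of $(H(X),\zeta)$: let $\{f_n\}$ be $\zeta$-Cauchy. Then $\{f_n\}$ and $\{f_n^{-1}\}$ are both $\rho$-Cauchy sequences in $\mathcal{C}(X,X)$, which is $\rho$-complete by Theorem \ref{T2} applied with $Y=X$ (a compact metric space). Hence $f_n\to g$ and $f_n^{-1}\to h$ uniformly for some $g,h\in\mathcal{C}(X,X)$; it remains to identify $g$ as a homeomorphism with $g^{-1}=h$. Fix $x\in X$ and put $y_n=f_n^{-1}(x)$, so $y_n\to h(x)$ and $f_n(y_n)=x$. Then $d(g(h(x)),x)=d(g(h(x)),f_n(y_n))\le d(g(h(x)),g(y_n))+\rho(g,f_n)\to 0$, using continuity of $g$ at $h(x)$ and $y_n\to h(x)$; thus $g(h(x))=x$. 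A symmetric computation with $z_n=f_n(x)\to g(x)$ and $f_n^{-1}(z_n)=x$ gives $h(g(x))=x$. So $g$ is a bijection with two-sided continuous inverse $h$, hence $g\in H(X)$ with $g^{-1}=h$, and $\zeta(f_n,g)=\rho(f_n,g)+\rho(f_n^{-1},h)\to 0$.

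I expect the main obstacle to be the first part, the continuity of the inversion operation: the natural but false temptation is to use a uniform modulus of continuity for the $f_n$, which need not exist. The fix is exactly the subsequence-extraction device above, which replaces that nonexistent uniformity by the genuine continuity of the single limit map ($f$ in the first part, $g$ and $h$ in the second). Once that device is set up, the completeness argument is essentially a rerun of it, so the two parts share their only real idea.
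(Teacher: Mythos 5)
Your proposal is correct, but it reaches the two halves of the theorem by somewhat different means than the paper. For the equivalence of $\rho$ and $\zeta$, the paper argues directly and quantitatively: given $\epsilon>0$ it takes $\delta<\frac{\epsilon}{2}$ from the uniform continuity of the single map $f^{-1}$ (available since $X$ is compact), and for any $h$ with $\rho(f,h)<\delta$ writes each value $h(x)$ as $f(z)$, so that $d(f(x),f(z))=d(f(x),h(x))<\delta$ forces $d\left(h^{-1}(h(x)),f^{-1}(h(x))\right)=d(x,z)<\frac{\epsilon}{2}$; taking the supremum over $x$ (using that $h$ is onto) gives $\rho(h^{-1},f^{-1})\le\frac{\epsilon}{2}$ and hence $\zeta(f,h)<\epsilon$. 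Your subsequence-and-contradiction argument via sequential compactness of $X$ proves the same continuity of inversion at $f$, but only qualitatively; the paper's route yields an explicit $\delta$, and, contrary to the concern in your last paragraph, it needs no uniformity over the family $\{f_n\}$ --- uniform continuity of the one limit inverse $f^{-1}$ suffices. For completeness your argument follows the same outline as the paper (both $\{f_n\}$ and $\{f_n^{-1}\}$ are $\rho$-Cauchy, apply Theorem \ref{T2} with $Y=X$, then identify the limit of the inverses), but your identification step is cleaner and in fact more careful: you verify both $g\circ h=\mathrm{id}$ and $h\circ g=\mathrm{id}$, so the limit really is a homeomorphism with two-sided inverse, whereas the paper checks only one composition and does so by invoking a modulus of continuity for $f_i^{-1}$ that is implicitly uniform in $i$ --- essentially the pitfall you flag. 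In short, your version buys extra rigor in the completeness half, while the paper's version buys a shorter, explicit estimate in the equivalence half.
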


\begin{proof}
In order to show that $\rho$ and $\zeta$ are equivalent, we must show that:
\begin{enumerate}
\item For any open set $V \subset (H(X), \rho)$ and any point $f \in H(X)$ such that $f \in V$, there exists an open set $U \subset (H(X), \zeta)$  such that $f \in U$ and $U \subset V$.
\item For any open set $U \subset (H(X), \zeta)$  and any point $f \in H(X)$ such that $f \in U$, there exists an open set $V \subset (H(X), \rho)$  such that $f \in V$ and $V \subset U$.
\end{enumerate}
To show (1), it suffices to show that a basic open set in $(H(X), \rho)$ centered about the point $f$ contains a basic open set in $(H(X), \zeta)$ centered about the same point.  A basic open set $V \subset (H(X), \rho)$ centered about $f \in H(X)$ is of the form $$V = \{ h \in H(X) \ | \ \rho(f,h) < \epsilon \}.$$  Let $U \subset \{H(X), \zeta \}$ be the basic open set centered about $f$ that is $$U = \{h\in H(X) \ | \ \zeta(f,h) < \epsilon \}.$$  Then for any $h \in U$, $$\rho(f,h) \leq \rho(f,h) + \rho(f^{-1}, h^{-1}) = \zeta(f,h) < \epsilon.$$  Thus $h$ is contained in $V$, so $U \subset V$.

To prove (2) is a bit more difficult.  Let $U$ be the same open set as defined above.  We want to find a basic open set, $W \subset (H(X), \rho)$, which contains $f$ and is contained in $U$.  First note that $H(X)$ is a set of uniformly continuous functions since $X$ is compact.  Since $f \in H(X)$, then $f^{-1} \in H(X)$.  Thus $f^{-1}$ is uniformly continuous.  By the uniform continuity of $f^{-1}$, there exists a $\delta > 0$ such that whenever $d(f(x), f(y)) < \delta$, then $d(x,y) < \frac{\epsilon}{2}$.  Without loss of generality we can choose $\delta < \frac{\epsilon}{2}$.  Then we let $W \subset (H(X), \rho)$ be a basic open set centered about $f$ such that $$W = \{h \in H(X) \ | \ \rho(f,h) < \delta \}.$$  Let $x \in X$ and $h \in W$.  Then $h(x) = f(z)$ for some unique $z \in X$ since both $h$ and $f$ are one-to-one and onto mappings of $X$ into $X$.  Since $d(f(x), h(x)) < \delta$, then $$d(f(x), f(z)) < \delta.$$  By uniform continuity $$d(x,z) < \frac{\epsilon}2.$$  Noting that $x = h^{-1}(h(x))$ and $z=f^{-1}(f(z))$ we rewrite this expression as $$d(h^{-1}(h(x)), f^{-1}(f(z)) < \frac{\epsilon}2$$ and then, $$d(h^{-1}(h(x)), f^{-1}(h(x))) < \frac{\epsilon}2.$$ Since $h(x)$ maps to all points in $X$,
\begin{align*}
\rho(h^{-1}, f^{-1}) =& \sup \{ d\left(h^{-1}(y), f^{-1}(y)\right) \ | \ y \in X \} \\ =& \sup \{ d\left(h^{-1}(h(x)), f^{-1}(h(x))\right) \ | \ x \in X \} \leq \frac{\epsilon}2.
\end{align*}
Therefore, $$\zeta(f,h) = \rho(f,h) + \rho\left(f^{-1},h^{-1}\right) < \frac{\epsilon}2 + \frac{\epsilon}2 = \epsilon.$$  Then $h$ is contained in $U$, so $W \subset U$.

Therefore $\rho$ and $\zeta$ are equivalent metrics.

Finally we need to show that $(H(X), \zeta)$ is a complete metric space.  Let $\{f_i\}$ be a Cauchy sequence in $(H(X), \zeta)$.  Then $\{f_i\}$ and $\{f^{-1}_i\}$ are Cauchy sequences in $(H(X), \rho)$. Since $\rho$ is complete in $\mathcal{C}(X)$, it follows from Theorem \ref{T2} that $\{f_i\} \to f$ and $\{f^{-1}_i\} \to g$ for some functions $f, g \in \mathcal{C}(X)$.  Let $f(x) = y$ where $x \in X$.  We need to show that $g(y) = x$.  By continuity of $f^{-1}$, for any $\epsilon > 0$, there exists $\delta > 0$ such that whenever $d\left(y, f_i(x)\right) < \delta$ then $d\left(f_i^{-1}(y), f^{-1}_i(f_i(x))\right) < \epsilon$.  By the convergence of $\{f_i\}$ to $f$, there exists an $N$ such that whenever $n \geq N$, then $d\left(y, f_i(x)\right)< \delta$.  In other words $d\left(f_i^{-1}(y), x\right) < \epsilon$ whenever $n \geq N$.  Hence we may conclude that $f_i^{-1}(y) \to x$, so $g(y) = x$. Therefore $g = f^{-1}$.
\end{proof}

We now give criteria for convergence of an infinite composition of a sequence of homeomorphisms to a homeomorphism.

\begin{thm} \label{T4}
Let $X$ be a compact space and for each integer $n \geq 1$ let $H_n$ be a family of homeomorphisms of $X$ onto itself such that for each $\epsilon > 0$ there exists an $f \in H_n$ such that $\rho(f, \text{id}) < \epsilon$.  Then we can select $h_n \in H_n$ such that the left composition $$h=\underset{n \to \infty}{\lim} h_nh_{n-1}\ldots h_1$$ defines a homeomorphism.  Furthermore, $h$ can be chosen so that $\rho(h,\text{id})$ is arbitrarily small.
\end{thm}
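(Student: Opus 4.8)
The plan is to select the $h_n$ one at a time so that the partial left compositions $g_n := h_nh_{n-1}\cdots h_1$ form a Cauchy sequence in the metric $\zeta$ of Theorem~\ref{T3}. Since $(H(X),\zeta)$ is complete and $\zeta\geq\rho$, the sequence $\{g_n\}$ will then converge, in $\zeta$ and hence in $\rho$, to a homeomorphism $h\in H(X)$ with $h=\lim_{n\to\infty}h_nh_{n-1}\cdots h_1$ as claimed. At the outset I fix positive reals $\epsilon_1,\epsilon_2,\ldots$ with $\sum_{n\geq1}\epsilon_n<\epsilon$, where $\epsilon>0$ is the prescribed bound on $\rho(h,\text{id})$; taking $\epsilon_n=\epsilon/2^{n+1}$ will do.

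The construction is inductive. Using the hypothesis on $H_1$, I first pick $h_1\in H_1$ with $\rho(h_1,\text{id})<\epsilon_1$ and set $g_1=h_1$. Assuming $h_1,\ldots,h_n$ have been chosen, so that $g_n=h_n\cdots h_1\in H(X)$ is a fixed homeomorphism, I invoke compactness of $X$: the map $g_n^{-1}$ is uniformly continuous, so there is a $\delta_{n+1}$ with $0<\delta_{n+1}<\epsilon_{n+1}$ such that $d(a,b)<\delta_{n+1}$ implies $d\bigl(g_n^{-1}(a),g_n^{-1}(b)\bigr)<\epsilon_{n+1}$. Then, using the hypothesis on $H_{n+1}$, I choose $h_{n+1}\in H_{n+1}$ with $\rho(h_{n+1},\text{id})<\delta_{n+1}$ and put $g_{n+1}=h_{n+1}g_n$.

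Two estimates close the argument. Since right composition by a homeomorphism only relabels the supremum, $\rho(g_{n+1},g_n)=\sup_x d\bigl(h_{n+1}(g_n(x)),g_n(x)\bigr)=\rho(h_{n+1},\text{id})<\epsilon_{n+1}$. For the inverses, the substitution $y=f(x)$ shows that $\rho(f^{-1},\text{id})=\sup_x d(x,f(x))=\rho(f,\text{id})$ for every $f\in H(X)$, so $\rho(h_{n+1}^{-1},\text{id})<\delta_{n+1}$; since $g_{n+1}^{-1}=g_n^{-1}h_{n+1}^{-1}$, the choice of $\delta_{n+1}$ together with uniform continuity of $g_n^{-1}$ gives $\rho(g_{n+1}^{-1},g_n^{-1})=\sup_y d\bigl(g_n^{-1}(h_{n+1}^{-1}(y)),g_n^{-1}(y)\bigr)\leq\epsilon_{n+1}$. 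Hence $\zeta(g_{n+1},g_n)\leq 2\epsilon_{n+1}$, so $\zeta(g_m,g_n)\leq 2\sum_{k>n}\epsilon_k\to0$ for $m>n$ and $\{g_n\}$ is $\zeta$-Cauchy; Theorem~\ref{T3} then yields the limit homeomorphism $h$. Finally $\rho(g_n,\text{id})\leq\rho(g_1,\text{id})+\sum_{k=1}^{n-1}\rho(g_{k+1},g_k)<\sum_{k=1}^{n}\epsilon_k$, so $\rho(h,\text{id})=\lim_n\rho(g_n,\text{id})\leq\sum_{k\geq1}\epsilon_k<\epsilon$, which is the ``furthermore'' clause.

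The step I expect to be the real obstacle is the control of the inverses. Making the forward maps $\rho$-Cauchy is automatic — each near-identity left factor $h_{n+1}$ moves $g_n$ by at most $\rho(h_{n+1},\text{id})$ — but a left factor close to the identity does \emph{not} by itself force $g_{n+1}^{-1}$ to be $\rho$-close to $g_n^{-1}$. What makes it work is that by the time $h_{n+1}$ is chosen the homeomorphism $g_n^{-1}$ is already determined and, because $X$ is compact, uniformly continuous; this is exactly why the $h_n$ must be chosen inductively rather than all at once, and why the hypothesis is phrased separately for each $H_n$.
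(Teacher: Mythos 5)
Your proof is correct and follows essentially the same route as the paper: choose the $h_n$ inductively so that the partial compositions form a Cauchy sequence in $\zeta$, then invoke the completeness of $(H(X),\zeta)$ from Theorem~\ref{T3} and a telescoping estimate for the ``furthermore'' clause. The only difference is cosmetic: where the paper simply asserts that $h_{n+1}$ can be chosen with $\zeta(h_{n+1}h_n\cdots h_1,\,h_n\cdots h_1)<\epsilon/2^{n+1}$ (implicitly relying on the metric equivalence established in Theorem~\ref{T3}), you spell out the justification via uniform continuity of $g_n^{-1}$ and the identity $\rho(f^{-1},\text{id})=\rho(f,\text{id})$, which is a faithful elaboration of the same idea.
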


\begin{proof}
In the previous theorem it was shown that $\zeta$ and $\rho$ are equivalent metrics and that $(H(X), \zeta)$ is a complete metric space.  First let $\epsilon > 0$ be given.  Choose $h_1 \in H_1$ so that $\zeta(h_1, \text{id}) < \frac{\epsilon}2$.  Then it is also true that $\rho(h_1, \text{id}) < \frac{\epsilon}2$.  Inductively, choose $h_{n+1} \in H_{n+1}$ so that  $$\zeta(h_{n+1}h_n \ldots h_1, h_n \ldots h_1) < \frac{\epsilon}{2^{n+1}}.$$  Again, it is also true that $$\rho(h_{n+1}h_n \ldots h_1, h_n \ldots h_1) < \frac{\epsilon}{2^{n+1}}.$$  Thus $\{ h_nh_{n-1}\ldots h_1 \}$ is a Cauchy sequence in $(H(X), \zeta)$.  Then we know by Theorem \ref{T3} that $$h = \underset{n \to \infty}{\lim} h_n h_{n-1} \ldots h_1$$ exists and $h \in H(X)$.  Observe that $\rho(h, \text{id}) < \epsilon$.
\end{proof}

\section{Main Result}

It will now be demonstrated that a pseudo-boundary point of $Q$ can be mapped into the pseudo-interior of $Q$ by a homeomorphism. First we define a family of homeomorphisms from which homeomorphisms may be chosen arbitrarily close to the identity.  Let $\epsilon_j = \dfrac{1}{2^j}$.  Then $2\epsilon_j$ is the length of the $j$th coordinate space of the Hilbert cube with respect to the metric $d$.  For each $n > 0$ and $m > n$, define $\overline{\phi_n^m}: I_n \times I_m \to I_n \times I_m;$ $$\overline{\phi_n^m}\left( x,y \right) = \left\{
        \begin{array}{ll}
        \left( \sigma -  \frac{\epsilon_m}{\epsilon_n} (y + \sigma ), y+\sigma + \frac{\epsilon_n}{\epsilon_m} (x-\sigma)  \right) & \text{ if Type I;} \\
        \left( x, y + \sigma + \frac{\epsilon_n}{\epsilon_m}(x-\sigma)   \right) & \text{ if Type II;} \\
        \left( x  - y \dfrac{\sigma(1-  \frac{\epsilon_m}{\epsilon_n})-x}{\frac{\epsilon_n}{\epsilon_m}(x-\sigma) + \sigma},  \sigma + \frac{\epsilon_n}{\epsilon_m} (x-\sigma)  \right) & \text{ if Type III;} \\
        \left( x -  \frac{\epsilon_m}{\epsilon_n}y, y \right) & \text{ if Type IV.}
        \end{array}
        \right.$$
where $(x,y) \in I_n \times I_m$ and $$\sigma = \left\{
                                                  \begin{array}{ll}
                                                    1, & \text{ if } x \geq 0; \\
                                                    -1, & \text{ if } x < 0.
                                                  \end{array}
                                                \right.$$
Each point $(x,y)$ satisfies one of the following conditions:
\begin{align*}
&\text{Type I} & \quad & xy \leq 0,\  1-\frac{\epsilon_m}{\epsilon_n} \leq |x| \leq 1,  \text{ and } \frac{\epsilon_n}{\epsilon_m}(|x|-1)+1 \leq |y| \leq 1 \\
&\text{Type II} & \quad & xy \leq 0,\  1-\frac{\epsilon_m}{\epsilon_n} \leq |x| \leq 1,  \text{ and }  0\leq |y| \leq \frac{\epsilon_n}{\epsilon_m}(|x|-1) +1 \\
&\text{Type III} & \quad & xy \geq 0,\  1-\frac{\epsilon_m}{\epsilon_n} \leq |x| \leq 1,  \text{ and } 0\leq |y| \leq \frac{\epsilon_n}{\epsilon_m}(|x|-1) +1 \\
&\text{Type IV} & \quad & xy \geq 0,\  1-\frac{\epsilon_m}{\epsilon_n} \leq |x| \leq 1,  \text{ and } \frac{\epsilon_n}{\epsilon_m}(|x|-1)+1 \leq |y| \leq 1;  \text{ or } |x| \leq 1-\frac{\epsilon_m}{\epsilon_n}\\
\end{align*}
Essentially $\overline{\phi_n^m}$ first shrinks the $I_n \times I_m$ to an $\epsilon_nI_n \times \epsilon_mI_m$ cell.  The center line, which consists of points with coordinates $(x,y)$ such that $|x| \leq 1-\epsilon_m$ and $y=0$, is held fixed while the boundary is twisted isometrically in the counter-clockwise direction a distance of $\epsilon_m$.  This gives a $\frac{\pi}{4}$ twist of the vertical segments which intersect the center line.  Points with first coordinate such that $|x| > 1-\epsilon_m$ are mapped by radial extension about the endpoints of the centerline (Figure \ref{F2}).  Finally, the $\epsilon_nI_n \times \epsilon_mI_m$ cell is mapped back to the $I_n \times I_m$ cell.
\begin{figure}\label{F2}
  \centering \label{F2}
  \includegraphics[scale=0.7]{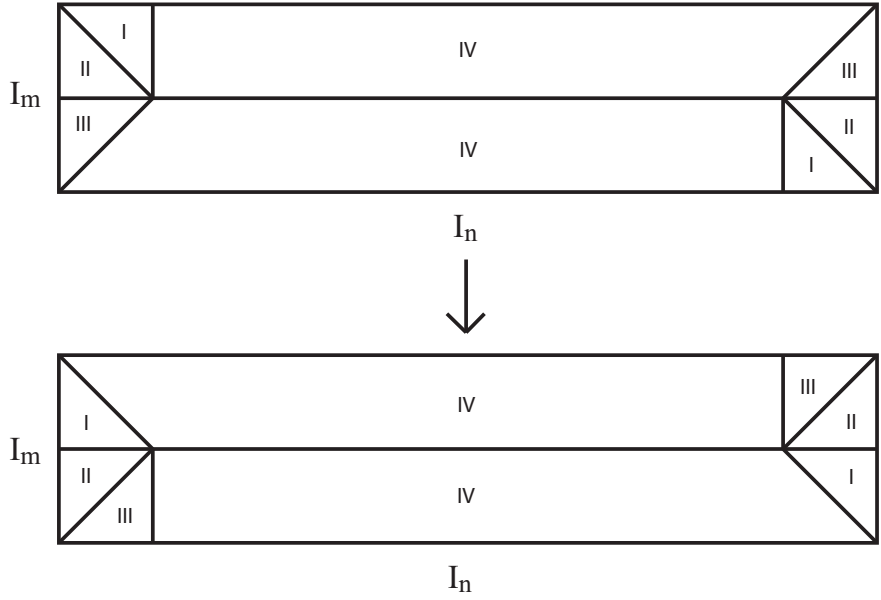}
  \caption{A guide to the mapping $\phi_n^m$.}
\end{figure}
Clearly, three consecutive twists on the $I_n \times I_m$ cell of this type move points with first coordinate on the boundary of $I_n$ to points with first coordinate in the interior of $I_n$.  In other words, $\overline{\left[\phi^m_n\right]}^3(\{-1,1\} \times I_m) \subset I_n^o \times \{-1,1\}$.  Define $\phi_n^m$ to be $\overline{\phi_n^m}$ crossed with the identity map on all other factors.  Observe that $\rho\left(\phi_n^m, \text{id}\right) \leq \epsilon_m$.  Let $\Phi_n^m = \left[\phi_n^m\right]^3$.  Then $\Phi_n^m$ removes points from the boundary of the $I_n$ cell and $\rho\left(\Phi_n^m\right) \leq 3 \epsilon_m$.

Similarly, the inverse map of each $\Phi_n^m$ is the homeomorphism $\Psi_n^m: Q \to Q$ defined as $\left[ \psi_n^m \right]^3$ where $\psi_n^m$ is defined by crossing $\overline{\psi_n^m}$ with the identity map on all other factors where $\overline{\psi_n^m}: I_n \times I_m \to I_n \times I_m$; $$\overline{\psi_n^m}\left( x,y \right) = \left\{
        \begin{array}{ll}
        \left( x  + y \dfrac{\sigma(1-  \frac{\epsilon_m}{\epsilon_n})-x}{\frac{\epsilon_n}{\epsilon_m}(x-\sigma) + \sigma}, -\sigma -\frac{\epsilon_n}{\epsilon_m} (x-\sigma)   \right) & \text{ if Type I';} \\
        \left( x, y - \sigma - \frac{\epsilon_n}{\epsilon_m}(x-\sigma)    \right) & \text{ if Type II'';} \\
        \left( \sigma -  \frac{\epsilon_m}{\epsilon_n} (-y + \sigma ), y-\sigma - \frac{\epsilon_n}{\epsilon_m} (x-\sigma)  \right) & \text{ if Type III';} \\
        \left( x + \frac{\epsilon_m}{\epsilon_n}y, y \right) & \text{ if Type IV'.}
        \end{array}
        \right.$$
The conditions are given as follow:
\begin{align*}
&\text{Type I'} & \quad & xy \leq 0,\  1-\frac{\epsilon_m}{\epsilon_n} \leq |x| \leq 1,  \text{ and }  0\leq |y| \leq \frac{\epsilon_n}{\epsilon_m}(|x|-1) +1 \\
&\text{Type II'} & \quad & xy \geq 0,\  1-\frac{\epsilon_m}{\epsilon_n} \leq |x| \leq 1,  \text{ and } 0\leq |y| \leq \frac{\epsilon_n}{\epsilon_m}(|x|-1) +1 \\
&\text{Type III'} & \quad & xy \geq 0,\  1-\frac{\epsilon_m}{\epsilon_n} \leq |x| \leq 1,  \text{ and } \frac{\epsilon_n}{\epsilon_m}(|x|-1)+1 \leq |y| \leq 1 \\
&\text{Type IV'} & \quad & xy \leq 0,\  1-\frac{\epsilon_m}{\epsilon_n} \leq |x| \leq 1,  \text{ and } \frac{\epsilon_n}{\epsilon_m}(|x|-1)+1 \leq |y| \leq 1;  \text{ or } |x| \leq 1-\frac{\epsilon_m}{\epsilon_n} \\
\end{align*}
The mapping $\psi_n^m$ is almost identical to the map $\phi_n^m$ except the rectangular boundary of the $I_m \times I_n$ cell is twisted in the clockwise direction instead of the counter-clockwise direction (Figure \ref{F3}).  Again note that $\rho(\Psi_n^m, \text{id}) \leq 3 \epsilon_m$.
\begin{figure}
  \centering
  \includegraphics[scale=0.7]{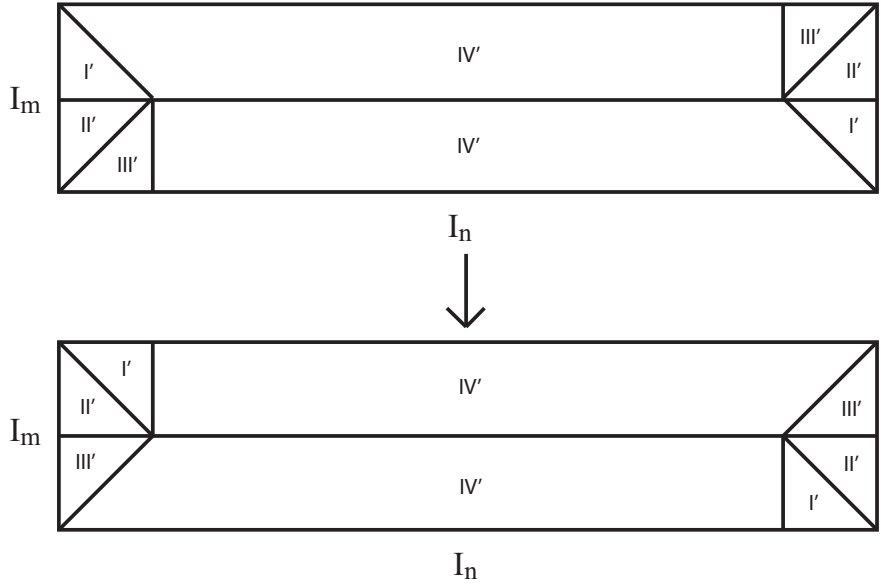}
  \caption{A guide to the mapping $\psi_n^m$.}
  \label{F3}
\end{figure}
By Theorem \ref{T4}, if a sequence of compositions of homeomorphisms is chosen carefully enough, then the sequence will converge to a homeomorphism.  In fact, it is sufficient that the sequence be selected so that it is Cauchy in $\zeta$.  Then in order to select a sequence of homeomorphisms $$\left\{ \Phi_{n(i)}^{m(i)}\Phi_{n(i-1)}^{m(i-1)}\ldots \Phi_{n(1)}^{m(1)} \right\}$$ which converges to a homeomorphism, given an $\epsilon > 0$, it must be possible to choose $m_{i+1}$ so that $$ \zeta\left( \Phi_{n(i+1)}^{m(i+1)}\Phi_{n(i)}^{m(i)}\Phi_{n(i-1)}^{m(i-1)}\ldots \Phi_{n(1)}^{m(1)}, \Phi_{n(i)}^{m(i)}\Phi_{n(i-1)}^{m(i-1)}\ldots \Phi_{n(1)}^{m(1)} \right) < \epsilon.$$
This may be accomplished if the following two conditions are satisfied:
\begin{align*}
& \rho \left( \Phi_{n(i+1)}^{m(i+1)}\Phi_{n(i)}^{m(i)}\Phi_{n(i-1)}^{m(i-1)}\ldots \Phi_{n(1)}^{m(1)}, \Phi_{n(i)}^{m(i)}\Phi_{n(i-1)}^{m(i-1)}\ldots \Phi_{n(1)}^{m(1)} \right) < \frac{\epsilon}2 \\
& \rho \left( \Psi_{n(1)}^{m(1)}\Psi_{n(2)}^{m(2)}\ldots \Psi_{n(i)}^{m(i)}\Psi_{n(i+1)}^{m(i+1)}, \Psi_{n(1)}^{m(1)}\Psi_{n(2)}^{m(2)}\ldots \Psi_{n(i)}^{m(i)} \right) \leq \frac{\epsilon}2
\end{align*}
The first condition is easy to satisfy.  Since $\rho\left(\Phi^{m+1}_{n+1}, \text{id}\right) \leq 3 \epsilon_{m(i+1)}$, we simply require that $\epsilon_{m(i+1)} < \frac{\epsilon}6$.  The second condition requires more analysis.  Since $\Psi_n^m$ moves points using the same type of twist as $\Phi_n^m$ only in the opposite direction, it is also true that $\rho(\Psi_n^m, \text{id}) \leq 3 \epsilon_m$.  However, the difficulty in satisfying the second condition is that although a point $p$ may be very close to $\Psi^{m(i+1)}_{n(i+1)}(p)$, these points may not be close when applied to $\Psi_{n(1)}^{m(1)}\Psi_{n(2)}^{m(2)}\ldots \Psi_{n(i)}^{m(i)}$.  But, if there is a constant $K$ satisfying the condition $$d\left(\Psi_n^m(p), \Psi_n^m(q)\right) \leq K d(p,q)$$ for any two points $p,q \in Q$, then we have $$d \left( \Psi_{n(1)}^{m(1)}\Psi_{n(2)}^{m(2)}\ldots \Psi_{n(i)}^{m(i)}\Psi_{n(i+1)}^{m(i+1)}(p), \Psi_{n(1)}^{m(1)}\Psi_{n(2)}^{m(2)}\ldots \Psi_{n(i)}^{m(i)}(p) \right) \leq K^i d\left(\Psi_{n(i+1)}^{m(i+1)}(p),p \right).$$  Applied to the sup-norm metric $$\rho \left( \Psi_{n(1)}^{m(1)}\Psi_{n(2)}^{m(2)}\ldots \Psi_{n(i)}^{m(i)}\Psi_{n(i+1)}^{m(i+1)}, \Psi_{n(1)}^{m(1)}\Psi_{n(2)}^{m(2)}\ldots \Psi_{n(i)}^{m(i)} \right) \leq K^i \rho\left(\Psi_{n(i+1)}^{m(i+1)},\text{id}\right).$$  By also choosing $m_{i+1}$ so that $\rho\left(\Psi_{n(i+1)}^{m(i+1)}, \text{id}\right) \leq \frac{\epsilon}{2K^i}$, we are able to obtain the desired bound.

We now argue that such a constant exists, and in fact that $K = 2^3$.  Given $p, q \in Q$, the change in separation of these two points applied to $\Psi_n^m$ depends entirely on how the points are moved by $\overline{\left[ \psi_n^m \right]}^3$ in the $I_m \times I_n$ cell.  Let $\overline{p} = (p_n,p_m)$ and $\overline{q} = (q_n,q_m)$ and define $\overline{d}:(I_m \times I_n) \times (I_m \times I_n) \to \mathbb{R}$; $$\overline{d}(\overline{p}, \overline{q}) = \epsilon_n | p_n - q_n | + \epsilon_m | p_m - q_m |.$$  Careful observation reveals that the distance $\overline{d}(\overline{p},\overline{q})$ is realized by the length of some path consisting of horizontal and vertical segments (Figure \ref{F4}).
\begin{figure}
  \centering
  \includegraphics[scale=0.7]{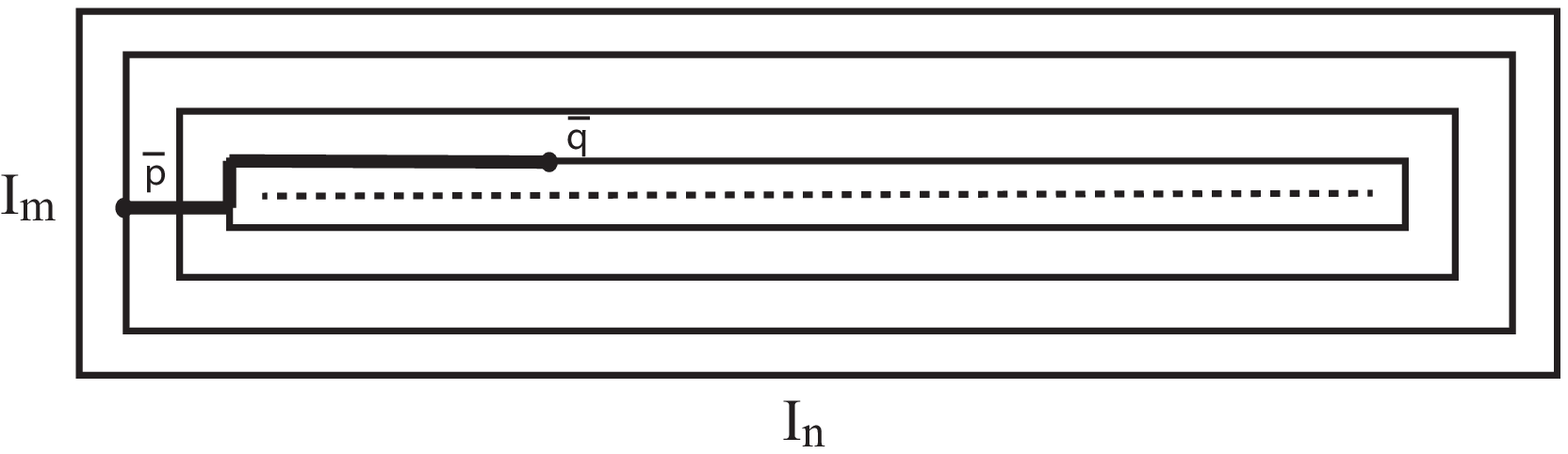}
  \caption{The $\overline{d}$ distance between $\overline{p}$ and $\overline{q}$.}
  \label{F4}
\end{figure}
Segments which lie on the boundary of ``concentric'' rectangles are not changed in length by $\overline{\psi_n^m}$.  Subsegments which form right angles between boundaries are shifted by $\frac{\pi}4$, separating the endpoints by twice the original distance (Figure \ref{F5}).
\begin{figure}
  \centering
  \includegraphics[scale=0.7]{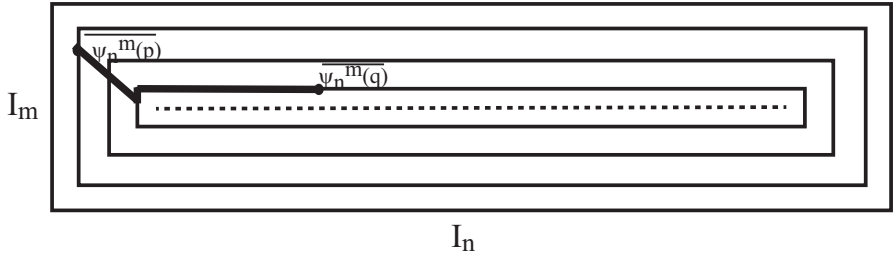}
  \caption{The $\overline{d}$  distance between $\overline{\psi^m_n(p)}$ and $\overline{\psi^m_n(q)}$.}
  \label{F5}
\end{figure}
If $\gamma$ is the contribution to the distance between $p$ and $q$ on all other factors, then $$d(p,q) = \overline{d}(\overline{p}, \overline{q}) + \gamma$$ and
\begin{align*}
d(\psi_n^m(p),\psi_n^m(q)) =& \overline{d}(\overline{p}, \overline{q}) + \gamma \\
\leq & 2 d(p,q).
\end{align*}
Thus  $$d(\psi_n^m(p),\psi_n^m(q)) \leq  2^3 d(p,q).$$  Therefore $K = 2^3$.

As discussed previously, $m_{i+1}$ should be chosen so that $\rho\left(\Phi^{m(i+1)}_{n(i+1)}, \text{id}\right) \leq \frac{\epsilon}{2 K^i}$.   In other words we require that $3  \epsilon_{m(i+1)} \leq \frac{\epsilon}{2^{3i+1}}$.  Then both conditions (1) and (2) are satisfied by choosing $m_{i+1}$ so that $\epsilon_{m(i+1)}\leq \frac{\epsilon}{3\cdot 2^{3i+1}}$.  Noting that $\epsilon_{m(i+1)} = \frac{1}{2^{m(i+1)}}$ the inequality reduces to $m_{i+1} \geq \log_2\left(\frac 3{\epsilon}\right) + (3i+1)$.  This gives the desired result, which is $$\zeta \left( \Phi_{n(i+1)}^{m(i+1)}\Phi_{n(i)}^{m(i)}\Phi_{n(i-1)}^{m(i-1)}\ldots \Phi_{n(1)}^{m(1)}, \Phi_{n(i)}^{m(i)}\Phi_{n(i-1)}^{m(i-1)}\ldots \Phi_{n(1)}^{m(1)} \right) < \epsilon.$$

\begin{thm} \label{T5}
For any point $p \in B(Q)$, there is a homeomorphism $h$ such that $h(p)$ is contained in the pseudo-interior of $Q$.
\end{thm}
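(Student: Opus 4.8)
The plan is to single out the ``bad'' coordinates of $p$, namely the set $A=\{\,j:|p_j|=1\,\}$, which is nonempty because $p\in B(Q)$, and then to use the homeomorphisms $\Phi_n^m$ to chase those coordinates off to infinity. The only feature of $\Phi_n^m$ that I will use is the one already isolated above: since $\overline{[\phi_n^m]}^3(\{-1,1\}\times I_m)\subset I_n^o\times\{-1,1\}$, a point $x$ with $|x_n|=1$ is sent by $\Phi_n^m$ to a point whose $n$-th coordinate lies in $(-1,1)$ and whose $m$-th coordinate lies in $\{-1,1\}$, every other coordinate being left fixed. So, applied to a point whose set of bad coordinates is $A'$ (with $n\in A'$ and $m>n$), the map $\Phi_n^m$ \emph{cures} coordinate $n$ at the cost of at worst \emph{infecting} coordinate $m$: the new bad set is $(A'\setminus\{n\})\cup\{m\}$ in every case (if $m$ was already bad, it simply stays bad).

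I would then assemble the required homeomorphism by one sweep $r=1,2,3,\dots$ through the coordinates. Carrying along the current bad set $A'$ (initially $A'=A$), at stage $r$ I do nothing if $r\notin A'$, and if $r\in A'$ I apply $\Phi_r^{m_r}$ for a suitable $m_r>r$ and replace $A'$ by $(A'\setminus\{r\})\cup\{m_r\}$. The convergence criterion established just before the theorem asks only that the second indices of the twists that are actually performed grow fast enough --- each must exceed an explicit bound depending only on its position in that sequence --- and since at stage $r$ we may take $m_r$ as large as we wish, this is easily met. Writing $\Phi_{n(i)}^{m(i)}$ for the $i$-th twist performed, the partial compositions $\Phi_{n(i)}^{m(i)}\cdots\Phi_{n(1)}^{m(1)}$ then form a Cauchy sequence in $\zeta$, so by Theorems~\ref{T3} and \ref{T4} their left composition $h=\lim_{i\to\infty}\Phi_{n(i)}^{m(i)}\cdots\Phi_{n(1)}^{m(1)}$ exists and is a homeomorphism of $Q$. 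Because each chosen $m_r$ exceeds $r$, a one-line induction gives the invariant that after stage $r$ one has $A'\subseteq\{r+1,r+2,\dots\}$; in particular every infection lands strictly to the right of the sweep position, so once a coordinate is cured it is never infected again.

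Granting this invariant, the conclusion drops out. A twist performed at a given stage has that stage as its first index, so the stages carrying twists form a strictly increasing sequence, and each $m_r$ is larger than its own stage $r$; hence no twist occurring at a stage past $j$ touches coordinate $j$. Therefore coordinate $j$ is not altered after stage $j$, so its value in $h(p)$ equals its value immediately after stage $j$, which the invariant with $r=j$ forces into $(-1,1)$ (this also covers the case in which coordinate $j$ is never touched at all, where already $j\notin A$). Hence every coordinate of $h(p)$ lies in $(-1,1)$, that is, $h(p)\in Q^o$, as asserted. The step I expect to need the most care in a full writeup is exactly the bookkeeping of this sweep --- formulating the invariant and checking that no cured coordinate is ever re-infected --- because the analytic substance, that a $\zeta$-Cauchy sequence of homeomorphisms converges to a homeomorphism together with the estimates that make the sequence Cauchy, has already been packaged into Theorems~\ref{T3} and \ref{T4} and the convergence criterion stated just before the theorem.
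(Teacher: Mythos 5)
Your proposal is correct and takes essentially the same route as the paper: the same twists $\Phi_n^m$, the same convergence criterion (second indices chosen large enough that the partial compositions are Cauchy in $\zeta$, hence converge to a homeomorphism by Theorems \ref{T3} and \ref{T4}), and the same cure-$n$/infect-$m$ bookkeeping. Your coordinate-by-coordinate sweep with the invariant $A'\subseteq\{r+1,r+2,\dots\}$ is just a cleaner organization of the paper's choice of $n_k$ as the least unprocessed index among the original boundary coordinates and the previously used $m$'s.
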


\begin{proof}
Let $p$ be the point of the pseudo-boundary of $Q$.  We would like to choose a sequence of homeomorphisms from the family of homeomorphisms, described above, which removes $p$ from the pseudo-boundary and whose composition converges to a homeomorphism.

First, we need to guarantee that given a sequence of the compositions of homeomorphisms of the form $\Phi_n^m$, that the sequence will converge to a homeomorphism.  Since $\rho(\Phi_n^m, \text{id})$ depends only on $m$, then by Theorem \ref{T4} we may choose each $m_i$ large enough so that $$h = \underset{n \to \infty}{\lim} \Phi_{n(i)}^{m(i)}\Phi_{n(i-1)}^{m(i-1)}\ldots \Phi_{n(1)}^{m(1)} $$ converges to a homeomorphism where $\{n_i\}$ is any increasing sequence and $\{m_i\}$ is an increasing sequence such that $m_i > n_i$ for all $i \in J$.  Letting $\epsilon = \frac{3}{2^{i+3}}$ in the previous argument we can see it is sufficient to choose $\{m_i\}$ as a subsequence of $\{4i\}$.

Let $\{ j_1, j_2, \ldots \}$ be the ordered set of all index numbers for which $p_{j_i} \in [-1,1]$.  Let $n_1 = j_1$.  Choose $m_1 \in \{4i\}$ such that $m_1 > n_1$.  For $k \geq 2$, we inductively define $n_{k}$ and $m_{k}$  as follows: Let $n_{k} = \min \left( \{m_1,\ldots, m_{k-1}, j_1,j_2, \ldots \} - \{n_1, \ldots, n_{k-1}\}\right)$. Choose $m_{k} \in \{ 4i \}$ such that $m_{k} > m_{k-1}$ and $m_{k} > n_{k}$.  Then
\begin{equation} \label{E2}
h = \underset{n \to \infty}{\lim} \Phi_{n(i)}^{m(i)}\Phi_{n(i-1)}^{m(i-1)}\ldots \Phi_{n(1)}^{m(1)}
\end{equation}
defines a homeomorphism.  Furthermore, at each $i$th stage, $$\Phi_{n(i)}^{m(i)}\Phi_{n(i-1)}^{m(i-1)}\ldots \Phi_{n(1)}^{m(1)}(p) \subset \underset{i=1}{\overset{n(i)}{\Pi}}I_i^o \times \underset{j={n(i)}}{\overset{\infty}{\Pi} } I_j.$$  Thus in the limit, $p$ is mapped into $\underset{i=1}{\overset{\infty}{\Pi}} I_i^o$, so $h(p)$ is in the pseudo-interior of $Q$.
\end{proof}

\begin{thm}
The Hilbert cube $Q$ is homogeneous.  Moreover, for points $p, q \in Q$, a homeomorphism that is the composition of maps of the form Equation \ref{E1} of the introduction and of the form Equation \ref{E2} of the proof of Theorem \ref{T5} can be defined to realize a self-homeomorphism of $Q$ mapping $p$ to $q$.
\end{thm}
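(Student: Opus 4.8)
The plan is to bootstrap from the two constructions already in hand. The introduction produces, for any two points $p',q'$ of the pseudo-interior $Q^o$, an explicit homeomorphism of $Q$ (the product map of Equation \ref{E1}, built from the piecewise-linear coordinate maps $f_i$) carrying $p'$ to $q'$; and Theorem \ref{T5} produces, for any pseudo-boundary point, an explicit homeomorphism -- a left-composition of maps of the form Equation \ref{E2} -- carrying that point into $Q^o$. Composing these in the right order will move an arbitrary $p$ to an arbitrary $q$.

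In detail, let $p,q\in Q$ be given. If $p\in B(Q)$, Theorem \ref{T5} furnishes a homeomorphism $h_p$, realized as a limit of the form Equation \ref{E2}, with $h_p(p)\in Q^o$; if $p\in Q^o$ we take $h_p=\text{id}$. Construct $h_q$ from $q$ in exactly the same way, so that $h_q(q)\in Q^o$. Set $p'=h_p(p)$ and $q'=h_q(q)$. Since $p',q'\in Q^o$, every coordinate $p'_i$ and $q'_i$ lies in the open interval $(-1,1)$, so the maps $f_i$ of the introduction (formed with $p'_i,q'_i$ in place of $p_i,q_i$) are genuine self-homeomorphisms of $I_i$ fixing $\{-1,1\}$; hence the map $g$ of Equation \ref{E1} is a homeomorphism of $Q$ with $g(p')=q'$. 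Put $H=h_q^{-1}\circ g\circ h_p$. As a composition of homeomorphisms, $H\in H(Q)$, and
$$H(p) = h_q^{-1}(g(h_p(p))) = h_q^{-1}(g(p')) = h_q^{-1}(q') = h_q^{-1}(h_q(q)) = q .$$
This establishes homogeneity.

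For the additional assertion, note that $H$ is literally the composition of $g$, a map of the form Equation \ref{E1}, with $h_p$ and with $h_q^{-1}$ (and $h_p$, resp.\ $h_q^{-1}$, drops out when $p$, resp.\ $q$, already lies in $Q^o$). The map $h_p$ is of the form Equation \ref{E2}. The map $h_q^{-1}$ is the inverse of such a map, and, as recorded in the discussion preceding Theorem \ref{T5}, it is given explicitly by the infinite right-composition $\underset{i\to\infty}{\lim}\,\Psi_{n(1)}^{m(1)}\Psi_{n(2)}^{m(2)}\cdots\Psi_{n(i)}^{m(i)}$ of the homeomorphisms $\Psi_n^m=(\Phi_n^m)^{-1}$; that this limit exists and is a homeomorphism follows from Theorem \ref{T4} (applied via the Lipschitz constant $K=2^3$ established for the $\psi_n^m$) exactly as in the argument leading up to Theorem \ref{T5}. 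Thus $H$ is assembled solely from the two explicit families of homeomorphisms introduced in the paper, allowing inverses of the twist family.

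I expect no serious obstacle here: the statement is in essence a corollary of Theorem \ref{T5} together with the introductory construction, and the only point that genuinely needs to be checked is the interface between the two ingredients -- namely that the hypothesis ``$p',q'\in Q^o$'' under which Equation \ref{E1} delivers a homeomorphism is precisely what Theorem \ref{T5} guarantees about $h_p(p)$ and $h_q(q)$. If one insists on phrasing $H$ without reference to inverses, the work shifts to verifying directly that the $\Psi$-limit displayed above is a homeomorphism of $Q$ carrying the prescribed pseudo-interior point $q'$ back to the prescribed point $q\in B(Q)$; this is the Theorem \ref{T5} argument run with the clockwise twists $\psi_n^m$ in place of the $\phi_n^m$, using the same convergence estimates and coordinate bookkeeping.
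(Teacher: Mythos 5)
Your proposal is correct and matches the paper's own argument: the paper does exactly this composition, just written out as four cases ($f$, $fh$, $h^{-1}f$, $h^{-1}fh'$) rather than uniformly as $h_q^{-1}\circ g\circ h_p$ with $h_p$ or $h_q$ taken to be the identity when the corresponding point is already in $Q^o$. Your extra remark that $h_q^{-1}$ is realized explicitly by the $\Psi_n^m$ limits is a harmless elaboration of the discussion preceding Theorem \ref{T5}.
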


\begin{proof}
Let $p,q \in Q$.  Let $f$ denote a homeomorphism of the type defined in Equation \ref{E1} of the introduction. Recall that $f$ can be defined to map any specified pseudo-interior point to any other specified pseudo-interior point, while fixing the pseudo-boundary of $Q$.  Let $h$ or $h'$ denote a homeomorphism of the type defined in Equation \ref{E2} of the proof of Theorem \ref{T5}.  Then $h$ or $h'$ can be defined to  map any specified pseudo-boundary point to some non-specified pseudo-interior point.
\begin{enumerate}
\item[Case 1.] If both $p$ and $q$ are interior points, a homeomorphism of the form  $f$ maps $p$ to $q$.
\item[Case 2.] If $p$ is a boundary point and $q$ is an interior point, a homeomorphism of the form $fh$ maps $p$ to $q$.
\item[Case 3.] If $p$ is an interior point and $q$ is a boundary point, a homeomorphism of the form $h^{-1}f$ maps $p$ to $q$.
\item[Case 4.] If both $p$ and $q$ are boundary points, a homeomorphism of the form  $h^{-1}fh'$ maps $p$ to $q$.
\end{enumerate}
Thus in all cases, there is a self-homeomorphism of $Q$ mapping $p$ to $q$ of the desired form.
\end{proof}

\end{document}